\documentclass[]{amsart}

\usepackage[english]{babel}
\usepackage[utf8]{inputenc}
\usepackage[T1]{fontenc}
\usepackage{microtype}
\usepackage{paralist}
\usepackage{amsmath, amsfonts, amssymb, amsthm}
\usepackage{color}
\usepackage[normalem]{ulem}
\usepackage{tikz}

\newcommand{\CC}{\mathbb{C}}

\newcommand{\NN}{\mathbb{N}}

\newcommand{\ZZ}{\mathbb{Z}}

\newcommand{\Hc}{\mathcal{H}}

\newcommand{\Oc}{\mathcal{O}}

\newcommand{\gfr}{\mathfrak{g}}
\newcommand{\set}[1]{\left\{ #1 \right\}}
\newcommand{\setb}[1]{\left( #1 \right)}
\newcommand{\abs}[1]{\left| #1 \right|}

\newcommand{\bino}[2]{\begin{pmatrix} #1 \\ #2 \end{pmatrix}}

\newtheorem{mymasterthm}{notForUse}
\theoremstyle{definition}

\theoremstyle{plain}
\newtheorem{mylemma}[mymasterthm]{Lemma}
\newtheorem{mythm}[mymasterthm]{Theorem}

\allowdisplaybreaks

\title[Asymptotics for Pillai's problem with polynomials]{Asymptotics for Pillai's problem with polynomials}
\makeatletter
\@namedef{subjclassname@2020}{\textup{2020} Mathematics Subject Classification}
\makeatother
\subjclass[2020]{11B37, 11D45}
\keywords{Polynomial power sums, Pillai problem}

\author[S. Heintze]{Sebastian Heintze}
\address{Sebastian Heintze\newline
	\indent Graz University of Technology\newline
	\indent Institute of Analysis and Number Theory\newline
	\indent Steyrergasse 30/II \newline
	\indent A-8010 Graz, Austria}
\email{heintze@math.tugraz.at}

\thanks{Supported by Austrian Science Fund (FWF) under project I4406}

\begin{document}
	
	\maketitle
	
	
	\begin{abstract}
		Let $ a_1(x)p_1(x)^n + \cdots + a_k(x)p_k(x)^n $ as well as $ b_1(x)q_1(x)^m + \cdots + b_l(x) q_l(x)^m $ be two polynomial power sums where the complex polynomials $ p_i(x) $ and $ q_j(x) $ are all non-constant.
		Then in the present paper we will give an asymptotic for the number of pairs $ (n,m) \in \NN^2 $ such that the degree of the sum of these two power sums is between $ 0 $ and $ d $ when $ d $ goes to infinity.
	\end{abstract}
	
	\section{Introduction}
	
	About hundred years ago Pillai considered in \cite{pillai-1931} exponential Diophantine equations of the shape
	\begin{equation*}
		a^n - b^m = f
	\end{equation*}
	for given positive integers $ a,b,f $ to be solved in integers $ n,m \geq 2 $.
	He proved for fixed integers $ a > 1 $ and $ b > 1 $ the asymptotic
	\begin{equation*}
		\# \set{(n,m) \in \NN^2 : 0 < a^n - b^m \leq x} \sim \frac{(\log x)^2}{2 \log a \log b}
	\end{equation*}
	as $ x \to \infty $.
	
	In \cite{kreso-tichy-} Kreso and Tichy considered the analogous situation for polynomials.
	Namely, they prove that for fixed non-constant coprime complex polynomials $ p(x) $ and $ q(x) $ we have the asymptotic
	\begin{equation*}
		\# \set{(n,m) \in \NN^2 : 0 \leq \deg \setb{p(x)^n - q(x)^m} \leq d} \sim \frac{d^2}{\deg p \deg q}
	\end{equation*}
	as $ d \to \infty $.
	It remains an open question in \cite{kreso-tichy-} to generalize the asymptotic result to more general polynmomial power sums.
	The purpose of the present paper is to provide such a generalization of the asymptotic result to polynomial power sums having dominant roots, see Theorem \ref{thm:genasymp} below.
	
	\section{Notation and results}
	
	Let us denote by $ F $ a function field in one variable over $ \CC $ and by $ \gfr $ the genus of $ F $.
	We will work with valuations and give here for the readers convenience a short wrap-up of this notion that can e.g.\ also be found in \cite{fuchs-heintze-p8}:
	For $ c \in \CC $ and $ f(x) \in \CC(x) $, where $ \CC(x) $ is the rational function field over $ \CC $, we denote by $ \nu_c(f) $ the unique integer such that $ f(x) = (x-c)^{\nu_c(f)} p(x) / q(x) $ with $ p(x),q(x) \in \CC[x] $ such that $ p(c)q(c) \neq 0 $. Further we write $ \nu_{\infty}(f) = \deg q - \deg p $ if $ f(x) = p(x) / q(x) $.
	These functions $ \nu : \CC(x) \rightarrow \ZZ $ are up to equivalence all valuations in $ \CC(x) $.
	If $ \nu_c(f) > 0 $, then $ c $ is called a zero of $ f $, and if $ \nu_c(f) < 0 $, then $ c $ is called a pole of $ f $, where $ c \in \CC \cup \set{\infty} $.
	For a finite extension $ F $ of $ \CC(x) $ each valuation in $ \CC(x) $ can be extended to no more than $ [F : \CC(x)] $ valuations in $ F $. This again gives up to equivalence all valuations in $ F $.
	Both, in $ \CC(x) $ as well as in $ F $ the sum-formula
	\begin{equation*}
		\sum_{\nu} \nu(f) = 0
	\end{equation*}
	holds, where the sum is taken over all valuations (up to equivalence) in the considered function field.
	For a finite set $ S $ of valuations on $ F $, we denote by $ \Oc_S^* $ the set of $ S $-units in $ F $, i.e. the set
	\begin{equation*}
		\Oc_S^* = \set{f \in F^* : \nu(f) = 0 \text{ for all } \nu \notin S}.
	\end{equation*}
	
	We will also use the Landau symbol $ \Oc $, i.e. ``Big-O'', in the usual way and the symbol $ \sim $ to denote asymptotic equality.
	Our result is now the following theorem:
	
	\begin{mythm}
		\label{thm:genasymp}
		Let $ p_1, \ldots, p_k, q_1, \ldots, q_l $ be non-constant complex polynomials and $ a_1, \ldots, a_k, b_1, \ldots, b_l $ be non-zero complex polynomials.
		Furthermore, assume that $ \deg p_1 > \max_{i=2,\ldots,k} \deg p_i $ and $ \deg q_1 > \max_{j=2,\ldots,l} \deg q_j $.
		Using the notation
		\begin{equation*}
			D(n,m) := \deg \setb{\sum_{i=1}^{k} a_i(x) p_i(x)^n + \sum_{j=1}^{l} b_j(x) q_j(x)^m}
		\end{equation*}
		we have
		\begin{equation*}
			A_d := \# \set{(n,m) \in \NN^2 : 0 \leq D(n,m) \leq d} \sim \frac{d^2}{\deg p_1 \cdot \deg q_1}
		\end{equation*}
		as $ d \to \infty $.
	\end{mythm}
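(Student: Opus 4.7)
My strategy is to use the dominant-root hypotheses to reduce the count to lattice points in an almost-rectangular region, with the remaining one-dimensional ``diagonal'' contributing only $\Oc(d)$. Write $N(n):=n\deg p_1+\deg a_1$ and $M(m):=m\deg q_1+\deg b_1$. Because $\deg p_1>\deg p_i$ for $i\ge 2$, the term $a_1p_1^n$ strictly dominates the others in degree once $n\ge n_0$ (a constant depending only on the polynomials), so $\deg P_n=N(n)$; symmetrically $\deg Q_m=M(m)$ for $m\ge m_0$. Whenever $N(n)\neq M(m)$ no cancellation of top terms is possible, so $D(n,m)=\max(N(n),M(m))$, and counting pairs with this maximum $\le d$ yields a shifted rectangle of cardinality $\frac{d^2}{\deg p_1\cdot\deg q_1}+\Oc(d)$. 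Pairs with $n<n_0$ or $m<m_0$ contribute only $\Oc(d)$, because $D\le d$ forces the free coordinate into a set of size $\Oc(d)$.

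The remaining case is the diagonal $N(n)=M(m)$, i.e.\ the line $n\deg p_1-m\deg q_1=\deg b_1-\deg a_1$, whose integer solutions form a single arithmetic progression $(n,m)=(n^{\ast}+(Q/g)t,\,m^{\ast}+(P/g)t)$ with $P:=\deg p_1$, $Q:=\deg q_1$, $g:=\gcd(P,Q)$, $t\in\NN$. Those with $N(n)\le d$ already number $\Oc(d)$. The delicate subcase is $N(n)>d$ yet $D(n,m)\le d$, which demands the top $N(n)-d$ coefficients of $P_n+Q_m$ all to vanish. Setting $\delta:=\min(\deg p_1-\max_{i\ge 2}\deg p_i,\;\deg q_1-\max_{j\ge 2}\deg q_j)\ge 1$, every subdominant term has degree at most $N(n)-\delta n+\Oc(1)$, so for large $t$ these top $N(n)-d$ coefficients coincide with those of $a_1p_1^n+b_1q_1^m$ alone.

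Expanding $a_1(x)p_1(x)^n$ at $x=\infty$, the coefficient of $x^{N(n)-j}$ equals $A\alpha^n\pi_j(n)$, where $\alpha,A$ are the leading coefficients of $p_1,a_1$ and $\pi_j$ is a polynomial in $n$ of degree $\le j$ determined by the lower coefficients of $a_1,p_1$; an analogous expression holds for $b_1q_1^m$. On the diagonal the vanishing of the $j$-th top coefficient thus turns into a polynomial-exponential identity
\begin{equation*}
(\alpha^{Q/g})^{t}\rho_j^+(t)+(\beta^{P/g})^{t}\rho_j^-(t)=0,
\end{equation*}
with polynomials $\rho_j^{\pm}$ of degree $\le j$. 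By a standard linear-recurrence argument, or equivalently by the function-field $S$-unit inequality in $F=\CC(x)$ already set up in Section 2, such an identity either holds trivially in $t$ or has only finitely many solutions $t$. If every $j$-th identity were trivial then $a_1p_1^n+b_1q_1^m\equiv 0$ on the whole diagonal, so $P_n+Q_m$ would reduce to subdominant terms whose degree grows in $t$ at a strictly smaller rate (because $\delta<P$ and $\delta<Q$), and the condition $D(n,m)\le d$ would still restrict $t$ to $\Oc(d)$ values. Hence in every case the diagonal contributes only $\Oc(d)$, so combining the pieces yields $A_d=\frac{d^2}{\deg p_1\deg q_1}+\Oc(d)$, which gives the asymptotic.

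The principal obstacle is this last step: ruling out infinitely many simultaneous top-coefficient cancellations on the diagonal short of $P_n+Q_m\equiv 0$. This is precisely where the valuation / $S$-unit machinery laid out in the preliminaries is meant to pay off, since the candidate cancellation identities are exactly $S$-unit relations in the rational function field.
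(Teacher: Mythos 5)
Your reduction to ``rectangle plus diagonal'' matches the skeleton of the paper's proof, and everything up to the diagonal is sound. The gap is exactly where you flag it, and the tool you propose to close it does not work as stated. The dichotomy ``such an identity either holds trivially in $t$ or has only finitely many solutions $t$'' is false for exponential--polynomial equations $(\alpha^{Q/g})^{t}\rho_j^{+}(t)+(\beta^{P/g})^{t}\rho_j^{-}(t)=0$: by Skolem--Mahler--Lech the zero set is a finite union of arithmetic progressions together with a finite set, and it can be an infinite progression without the identity holding identically (take $\alpha^{Q/g}/\beta^{P/g}$ a nontrivial root of unity; e.g.\ $2^{t}+(-2)^{t}=0$ for all odd $t$). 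Moreover, the number of coefficients forced to vanish is $N(n)-d$, which grows with $t$, so even granting tameness of each single identity you would still need the cascade over $j$ to terminate after boundedly many steps, which you do not argue. Finally, the identities you derive are numerical relations in $\CC$ indexed by $t$; they are not $S$-unit equations in $\CC(x)$, so the Brownawell--Masser inequality of the preliminaries does not apply to them in the form you invoke --- the two tools are not ``equivalent.''

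The paper closes the diagonal case by keeping the equation functional rather than passing to coefficients. Writing $\sum_i a_i p_i^n+\sum_j b_j q_j^m=f$ with $0\le\deg f\le d$, dividing by $f$, and applying Theorem \ref{thm:brownawellmasser} to a minimal vanishing subsum containing the summand $1$ of
\begin{equation*}
1-\sum_{i=1}^{k}\frac{a_i}{f}\,p_i^n-\sum_{j=1}^{l}\frac{b_j}{f}\,q_j^m=0,
\end{equation*}
where one may take $|S|=\Oc(d)$ because $f$ has at most $d$ zeros, yields $\Hc\bigl(\tfrac{a_{i_0}}{f}p_{i_0}^n\bigr)\le C_{BM}=\Oc(d)$ or $\Hc\bigl(\tfrac{b_{j_0}}{f}q_{j_0}^m\bigr)\le C_{BM}$ for some term in that subsum, hence $\min(n,m)=\Oc(d)$; since $n$ and $m$ determine each other on the diagonal, this gives the required $\Oc(d)$ contribution. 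Note also that the degenerate branch of your argument ($a_1p_1^n+b_1q_1^m\equiv 0$ along the diagonal) is harmless for a different reason than the one you give: such pairs either make the whole sum vanish (so $D<0$ and they are not counted) or leave a subdominant part whose degree still grows linearly in $t$. To make your route rigorous you would essentially have to reprove the functional $S$-unit bound, so you should apply it directly to \eqref{eq:general1} as the paper does.
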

	
	In combination with Theorems 1 und 3 in \cite{fuchs-heintze-p8}, which state that under suitable assumptions for a fixed non-zero polynomial $ f(x) $ there are only finitely many representations of $ f(x) $ of the form
	\begin{equation}
		\label{eq:reprf}
		f(x) = \sum_{i=1}^{k} a_i(x) p_i(x)^n + \sum_{j=1}^{l} b_j(x) q_j(x)^m
	\end{equation}
	and that there are only finitely many such $ f(x) $ with more than one representation of the shape \eqref{eq:reprf}, respectively, Theorem \ref{thm:genasymp} above gives also an asymptotic for the number of polynomials $ f(x) $ with $ 0 \leq \deg f \leq d $ having a representation of the form \eqref{eq:reprf}.
	
	\section{Preliminaries}
	
	The proof of our theorem given in the next section will use height functions in function fields. Hence, let us define the height of an element $ f \in F^* $ by
	\begin{equation*}
		\Hc(f) := - \sum_{\nu} \min \setb{0, \nu(f)} = \sum_{\nu} \max \setb{0, \nu(f)}
	\end{equation*}
	where the sum is taken over all valuations (up to equivalence) on the function field $ F / \CC $. Additionally we define $ \Hc(0) = \infty $.
	This height function satisfies some basic properties, listed in the lemma below which is proven in \cite{fuchs-karolus-kreso-2019}:
	
	\begin{mylemma}
		\label{lemma:heightproperties}
		Denote as above by $ \Hc $ the height on $ F/\CC $. Then for $ f,g \in F^* $ the following properties hold:
		\begin{enumerate}[a)]
			\item $ \Hc(f) \geq 0 $ and $ \Hc(f) = \Hc(1/f) $,
			\item $ \Hc(f) - \Hc(g) \leq \Hc(f+g) \leq \Hc(f) + \Hc(g) $,
			\item $ \Hc(f) - \Hc(g) \leq \Hc(fg) \leq \Hc(f) + \Hc(g) $,
			\item $ \Hc(f^n) = \abs{n} \cdot \Hc(f) $,
			\item $ \Hc(f) = 0 \iff f \in \CC^* $,
			\item $ \Hc(A(f)) = \deg A \cdot \Hc(f) $ for any $ A \in \CC[T] \setminus \set{0} $.
		\end{enumerate}
	\end{mylemma}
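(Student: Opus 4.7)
The plan is to work directly from the two equivalent forms of the definition $\Hc(f) = -\sum_\nu \min(0, \nu(f)) = \sum_\nu \max(0, \nu(f))$ and to rely throughout on three elementary facts about valuations on $F/\CC$: the ultrametric inequality $\nu(f+g) \geq \min(\nu(f), \nu(g))$, additivity $\nu(fg) = \nu(f) + \nu(g)$, and the sum formula $\sum_\nu \nu(f) = 0$. I will also use that $\nu(c) = 0$ for every $c \in \CC^*$ and every $\nu$, since $\CC^*$ consists of units in every valuation ring of $F/\CC$.

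For a), non-negativity is immediate, and $\Hc(f) = \Hc(1/f)$ follows because $\nu(1/f) = -\nu(f)$ interchanges the two forms of the definition. For b) and c), I would obtain the upper bounds by applying, valuation-by-valuation, the ultrametric inequality (for b)) or additivity (for c)), combined with $\max(0,a+b) \leq \max(0,a) + \max(0,b)$, and then summing over $\nu$. The lower bounds follow from the identities $f = (f+g) + (-g)$ and $f = (fg) \cdot g^{-1}$ together with the upper bounds just proved and the facts $\Hc(-g) = \Hc(g)$ (since $\nu(-1) = 0$) and $\Hc(g^{-1}) = \Hc(g)$ from a). For d), split into $n \geq 0$ and $n < 0$: from $\nu(f^n) = n\,\nu(f)$ one obtains $\max(0, n\nu(f)) = n\max(0,\nu(f))$ in the first case and $\abs{n}\max(0,-\nu(f))$ in the second, and summing together with a) closes both cases.

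For e), $\Hc(f) = 0$ forces $\max(0, \nu(f)) = 0$ for every $\nu$, hence $\nu(f) \leq 0$ everywhere; the sum formula then upgrades this to $\nu(f) = 0$ for all $\nu$, and a non-zero element of $F$ with no zeros and no poles must be a constant (a standard consequence of the theory of function fields in one variable over $\CC$), giving $f \in \CC^*$. The converse is immediate from $\nu(c) = 0$.

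For f), factor $A(T) = c\prod_{i=1}^{d}(T - \alpha_i)$ with $d = \deg A$ and $\alpha_i \in \CC$. Using c) together with $\Hc(c) = 0$ yields the upper bound $\Hc(A(f)) \leq \sum_{i=1}^{d}\Hc(f - \alpha_i)$. The crucial auxiliary fact is that $\Hc(f - \alpha) = \Hc(f)$ for every $\alpha \in \CC$: at any $\nu$ with $\nu(f) < 0$, the ultrametric inequality combined with $\nu(\alpha) \geq 0$ forces $\nu(f - \alpha) = \nu(f)$, so the two elements share the same pole divisor, and the pole-count form of $\Hc$ gives equality. For the reverse inequality $\Hc(A(f)) \geq d \cdot \Hc(f)$, I would argue directly using poles: at any pole $\nu$ of $f$ one has $\nu(A(f)) = \sum_i \nu(f - \alpha_i) = d \cdot \nu(f)$, while $A(f)$ can have no poles outside those of $f$, so summing yields exactly $d \cdot \Hc(f)$. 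Part f) is the main obstacle, since a naive application of c) only supplies one direction; the equality hinges on the pole-preservation property of $\CC$-translates, which is the same observation that makes $\Hc(f+c) = \Hc(f)$ work for constants $c$.
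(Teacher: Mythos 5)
Your proof is correct, but note that the paper does not actually prove Lemma \ref{lemma:heightproperties}: it simply cites \cite{fuchs-karolus-kreso-2019} for it. So your self-contained valuation-theoretic verification is necessarily a different route from the paper's (which has none), and it is a sound one. Parts a), c), d) are exactly the routine manipulations you describe; the two places carrying real content are e), where you correctly combine the sum formula with the standard fact that a nonzero element of $F$ with no poles lies in the constant field $\CC$, and f), where your key observation --- that translation by $\alpha \in \CC$ preserves the pole divisor, so that $\nu(A(f)) = \deg A \cdot \nu(f)$ at every pole of $f$ while $A(f)$ acquires no new poles --- in fact yields the equality $\Hc(A(f)) = \deg A \cdot \Hc(f)$ in one stroke, making your separate upper bound via c) redundant. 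One precision is needed in b): the ultrametric inequality $\nu(f+g) \geq \min\setb{\nu(f),\nu(g)}$ only controls poles, not zeros (e.g.\ $f$ and $g$ can cancel to produce a zero of large order where neither has one), so the valuation-by-valuation argument must be run on the pole form $\Hc(\cdot) = -\sum_{\nu} \min\setb{0,\nu(\cdot)}$; the inequality you quote, $\max(0,a+b) \leq \max(0,a)+\max(0,b)$, is the one matching additivity in c), whereas for b) the pointwise fact you need is $\min\setb{0,\min(a,b)} \geq \min(0,a)+\min(0,b)$. With that reading, every step of your sketch goes through.
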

	
	Moreover, the following theorem due to Brownawell and Masser is an important ingredient for the proof section. It is an immediate consequence of Theorem B in \cite{brownawell-masser-1986}:
	
	\begin{mythm}[Brownawell-Masser]
		\label{thm:brownawellmasser}
		Let $ F/\CC $ be a function field in one variable of genus $ \gfr $. Moreover, for a finite set $ S $ of valuations, let $ u_1,\ldots,u_k $ be $ S $-units and
		\begin{equation*}
			1 + u_1 + \cdots + u_k = 0,
		\end{equation*}
		where no proper subsum of the left hand side vanishes. Then we have
		\begin{equation*}
			\max_{i=1,\ldots,k} \Hc(u_i) \leq \bino{k}{2} \left( \abs{S} + \max \setb{0, 2\gfr-2} \right).
		\end{equation*}
	\end{mythm}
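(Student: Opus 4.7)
The statement is a classical theorem of Brownawell and Masser from 1986; the excerpt already notes it is an immediate consequence of their Theorem~B, so my plan is to reproduce the Wronskian argument of their paper.

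Set $u_0 = 1$, so the relation reads $u_0 + u_1 + \cdots + u_k = 0$. The first step is to show that the functions $u_1, \ldots, u_k$ are linearly independent over $\CC$: any non-trivial $\CC$-linear relation among them, combined with $\sum u_i = 0$, would produce a shorter vanishing identity whose minimal vanishing subsum would contradict the no-proper-subsum hypothesis. Next, choose $t \in F$ with $F/\CC(t)$ separable and put $\partial = d/dt$. Then the Wronskian $W = \det(\partial^{j-1} u_i)_{1 \le i,j \le k}$ is non-zero. Differentiating $u_0 + u_1 + \cdots + u_k = 0$ up to order $k-1$ yields a linear system of $k$ equations in $u_1, \ldots, u_k$ whose coefficient matrix, up to signs, is this very Wronskian matrix; Cramer's rule then expresses each $u_i$ as a ratio of two $k \times k$ Wronskian-type determinants built from $1, u_1, \ldots, u_k$.

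The substance of the proof is to estimate the valuations of those determinants at every place $\nu$ of $F$. At a place $\nu \notin S$ every $u_i$ is a local unit, so the only contribution to $\nu$ of a Wronskian minor comes from the local behaviour of $\partial = d/dt$; Riemann--Hurwitz bounds the total such contribution by $\max(0, 2\gfr - 2)$, with a combinatorial factor $0 + 1 + \cdots + (k-1) = \bino{k}{2}$ coming from the successive derivative orders in the Wronskian expansion. At a place $\nu \in S$, a crude lower bound on the $\nu$-order of a $k \times k$ determinant together with the fact that the entries are $S$-units and their derivatives yields a comparable contribution with the same combinatorial factor per place in $S$. Combining these bounds via the sum-formula $\sum_\nu \nu(u_i) = 0$ delivers $\Hc(u_i) \leq \bino{k}{2}\left(|S| + \max(0, 2\gfr - 2)\right)$, after enlarging $S$ to contain the places above $\infty$ of $\CC(t)$ so that the terms depending on $[F:\CC(t)]$ get absorbed into $|S|$.

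The chief technical obstacle is precisely this valuation-theoretic bookkeeping: a naive execution produces an estimate of the correct qualitative shape $C_k\left(|S| + \max(0, 2\gfr - 2)\right)$ but with a suboptimal $C_k$, and obtaining the sharp coefficient $\bino{k}{2}$ requires the careful organisation of the Wronskian expansion that forms the technical core of the Brownawell--Masser paper.
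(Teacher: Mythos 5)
First, a point of comparison: the paper does not prove this statement at all. It is imported as a quoted result, with the remark that it is an immediate consequence of Theorem~B of Brownawell and Masser, so the paper's entire ``proof'' is that citation. Your proposal is thus an attempt to reconstruct the original 1986 argument, and while the Wronskian--Cramer skeleton you describe is indeed the right machinery, your very first step contains a genuine error.

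The claim that the hypothesis ``no proper subsum of $1 + u_1 + \cdots + u_k$ vanishes'' forces $u_1, \ldots, u_k$ to be linearly independent over $\CC$ is false. Take $F = \CC(x)$, $u_1 = x$, $u_2 = -2x$, $u_3 = x-1$, and let $S$ consist of the places $0$, $1$, $\infty$. Then $1 + u_1 + u_2 + u_3 = 0$, all three functions are $S$-units, one checks directly that none of the $14$ proper subsums vanishes, and yet $2u_1 + u_2 = 0$. Your suggested route to a contradiction --- substituting a dependence relation into the unit equation to obtain a shorter vanishing identity --- does not work: the shorter identity has terms that are constant multiples of the original $u_i$, so its vanishing subsums are not subsums of the original relation, and no contradiction with the hypothesis arises. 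Consequently the Wronskian $\det\left(\partial^{\,j-1} u_i\right)_{1 \leq i,j \leq k}$ on which your argument rests can be identically zero, and the Cramer's rule step collapses. This is not a removable formality: handling linear dependence is precisely where Brownawell and Masser have to work; their no-proper-subsum theorem is deduced from a statement about linearly independent functions by a genuine reduction (essentially an induction in which minimal vanishing linear combinations are extracted and the height bounds for ratios are propagated back). Beyond this, your last two paragraphs explicitly delegate the quantitative bookkeeping --- the part that produces the coefficient $\bino{k}{2}$ --- to the original paper, so even granting independence the proposal is a plan rather than a proof. Since the paper under review likewise only cites the result, that deferral is defensible; the false independence claim is not.
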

	
	\section{Proof}
	
	We are now ready to prove our theorem about the asymptotic number of solutions to the Pillai-type equation.
	
	\begin{proof}[Proof of Theorem \ref{thm:genasymp}]
		By the dominant root condition there exist positive integers $ N $ and $ M $ such that for $ n \geq N $ we have $ \deg(a_1(x) p_1(x)^n) > \deg(a_i(x) p_i(x)^n) $ for $ i=2,\ldots,k $ and for $ m \geq M $ we have $ \deg(b_1(x) q_1(x)^m) > \deg(b_j(x) q_j(x)^m) $ for $ j=2,\ldots,l $.
		
		Since we aim for proving an asymptotic result for $ d \to \infty $, we may assume that $ d $ is large enough such that the following four inequalities are valid:
		\begin{itemize}
			\item $ \forall n < N : \deg \left( \sum_{i=1}^{k} a_i(x) p_i(x)^n \right) < d $;
			\item $ \forall m < M : \deg \left( \sum_{j=1}^{l} b_j(x) q_j(x)^m \right) < d $;
			\item $ \frac{d - \deg a_1}{\deg p_1} > N+2 $;
			\item $ \frac{d - \deg b_1}{\deg q_1} > M+2 $.
		\end{itemize}
		
		We start by proving a lower bound for $ A_d $.
		For $ (n,m) \in \NN^2 $ with $ N \leq n \leq \frac{d - \deg a_1}{\deg p_1} $ and $ M \leq m \leq \frac{d - \deg b_1}{\deg q_1} $ we clearly have $ D(n,m) \leq d $.
		Moreover, for each such $ n $ there is at most one $ m $ with $ D(n,m) < 0 $.
		Thus we get
		\begin{align}
			A_d &\geq \left( \frac{d - \deg a_1}{\deg p_1} - 1 - N \right) \left( \frac{d - \deg b_1}{\deg q_1} - 1 - M \right) - \left( \frac{d - \deg a_1}{\deg p_1} - (N-1) \right) \nonumber \\
			\label{eq:lowerbound}
			&= \frac{d^2}{\deg p_1 \cdot \deg q_1} + \Oc(d).
		\end{align}
		
		Now we need an upper bound for $ A_d $.
		Assuming $ D(n,m) \leq d $, we distinguish four cases.
		In the first case let $ n \leq \frac{d - \deg a_1}{\deg p_1} $ and $ m \leq \frac{d - \deg b_1}{\deg q_1} $. Then there are at most
		\begin{equation*}
			\left( \frac{d - \deg a_1}{\deg p_1} \right) \left( \frac{d - \deg b_1}{\deg q_1} \right)
		\end{equation*}
		such pairs.
		
		In the second case we assume $ n \leq \frac{d - \deg a_1}{\deg p_1} $ and $ m > \frac{d - \deg b_1}{\deg q_1} $.
		Then $ b_1(x) q_1(x)^m $ is the term with largest degree and we get the contradiction $ D(n,m) = \deg b_1 + m \deg q_1 > d $.
		Analogously, the third case $ n > \frac{d - \deg a_1}{\deg p_1} $ and $ m \leq \frac{d - \deg b_1}{\deg q_1} $ ends up in a contradiction.
		
		Lastly, we consider the case $ n > \frac{d - \deg a_1}{\deg p_1} $ and $ m > \frac{d - \deg b_1}{\deg q_1} $.
		Here $ D(n,m) \leq d $ can only be possible if $ \deg a_1 + n \deg p_1 = \deg b_1 + m \deg q_1 $.
		Hence $ n $ and $ m $ uniquely determine each other.
		Writing
		\begin{equation}
			\label{eq:general1}
			\sum_{i=1}^{k} a_i(x) p_i(x)^n + \sum_{j=1}^{l} b_j(x) q_j(x)^m = f(x)
		\end{equation}
		with $ 0 \leq \deg f \leq d $ we aim for applying Theorem \ref{thm:brownawellmasser}.
		Choosing a finite set $ S $ of discrete valuations such that all $ a_i, p_i, b_j, q_j $ as well as $ f $ are $ S $-units is possible with
		\begin{equation*}
			|S| \leq 1 + d + \sum_{i=1}^{k} \deg a_i + \sum_{i=1}^{k} \deg p_i + \sum_{j=1}^{l} \deg b_j + \sum_{j=1}^{l} \deg q_j.
		\end{equation*}
		Let us define
		\begin{equation*}
			C_{BM} := \bino{k+l}{2} \left( 1 + d + \sum_{i=1}^{k} \deg a_i + \sum_{i=1}^{k} \deg p_i + \sum_{j=1}^{l} \deg b_j + \sum_{j=1}^{l} \deg q_j \right)
		\end{equation*}
		and rewrite equation \eqref{eq:general1} as
		\begin{equation*}
			1 - \sum_{i=1}^{k} \frac{a_i}{f} p_i^n - \sum_{j=1}^{l} \frac{b_j}{f} q_j^m = 0.
		\end{equation*}
		Now we take a closer look at a minimal vanishing subsum containing the summand $ 1 $.
		There must be at least one further summand in this subsum.
		Assume first that this summand has the form $ \frac{b_{j_0}}{f} q_{j_0}^m $ for some $ j_0 $.
		Then, by Theorem \ref{thm:brownawellmasser}, we get
		\begin{equation*}
			\Hc \left( \frac{b_{j_0}}{f} q_{j_0}^m \right) \leq C_{BM}
		\end{equation*}
		and by some standard calculations using properties of the height function from Lemma \ref{lemma:heightproperties} (cf. e.g. the calculations in \cite{fuchs-heintze-p8}) the bound
		\begin{equation*}
			m \leq \frac{C_{BM} + d + \deg b_{j_0}}{\deg q_{j_0}} \leq \frac{2 C_{BM}}{\min_{j=1,\ldots,l} \deg q_j}.
		\end{equation*}
		If the summand in the subsum has the form $ \frac{a_{i_0}}{f} p_{i_0}^n $ for some $ i_0 $, we analogously get
		\begin{equation*}
			n \leq \frac{2 C_{BM}}{\min_{i=1,\ldots,k} \deg p_i}.
		\end{equation*}
		Thus in both subcases we have the bound
		\begin{equation*}
			\min(n,m) \leq \frac{2 C_{BM}}{\min(\min_{i=1,\ldots,k} \deg p_i, \min_{j=1,\ldots,l} \deg q_j)} \leq 2 C_{BM}.
		\end{equation*}
		Recalling that $ n $ and $ m $ determine each other uniquely, yields that there are no more than $ 4 C_{BM} $ pairs $ (n,m) $ with $ D(n,m) \leq d $ in this case.
		
		Putting the things together from all the four analyzed cases we get the final upper bound
		\begin{align}
			A_d &\leq \left( \frac{d - \deg a_1}{\deg p_1} \right) \left( \frac{d - \deg b_1}{\deg q_1} \right) + 4 C_{BM} \nonumber \\
			\label{eq:upperbound}
			&= \frac{d^2}{\deg p_1 \cdot \deg q_1} + \Oc(d).
		\end{align}
		
		From the lower bound \eqref{eq:lowerbound} and the upper bound \eqref{eq:upperbound} the statement of the theorem follows immediately.
	\end{proof}

\end{document}